\documentclass[12pt]{amsart}
\usepackage{amssymb}
\usepackage[all]{xy}
\usepackage{mathrsfs}
\textheight 22.5truecm \textwidth 16.5truecm
\setlength{\oddsidemargin}{-0.35in}\setlength{\evensidemargin}{-0.35in}
\setlength{\topmargin}{-.5cm}
\newtheorem{theorem}{Theorem}[]
\newtheorem{lemma}[theorem]{Lemma}
\newtheorem{problem}[theorem]{Problem}
\newtheorem{corollary}[theorem]{Corollary}
\def\r{\mathrm}
\def\to{\rightarrow}
\def\sl{\langle}
\def\sr{\rangle}
\begin{document}
\title[Decomposition of functions in the orthogonality equation]
{Decomposition of functions between Banach spaces in the orthogonality equation}
\author[M.M. Sadr]{Maysam Maysami Sadr}
\address{Department of Mathematics\\
Institute for Advanced Studies in Basic Sciences\\
P.O. Box 45195-1159, Zanjan 45137-66731, Iran}
\email{sadr@iasbs.ac.ir}
\subjclass[2010]{Primary 39B52, 47A05, Secondary 47A62}
\keywords{Orthogonality equation, Banach space, Bounded linear operator.}
\begin{abstract}
Let $E,F$ be Banach spaces. In the case that $F$ is reflexive we give a description for the solutions $(f,g)$
of the Banach-orthogonality equation
$$\sl f(x),g(\alpha)\sr=\sl x,\alpha\sr\hspace{10mm}\forall x\in E,\forall \alpha\in E^*,$$
where $f:E\to F,g:E^*\to F^*$ are two maps. Our result generalizes the recent result of {\L}ukasik and W\'{o}jcik in the case that
$E$ and $F$ are Hilbert spaces.
\end{abstract}
\maketitle
\section{Introduction}
The topological dual of a Banach space $E$ is denoted by $E^*$, and $\sl\cdot,\cdot\sr$ denotes the usual pairing between elements of $E$ and $E^*$.
Let $E,F$ be Banach spaces and $f:E\to F$, $g:E^*\to F^*$ be two maps. Consider the following functional equation,
which we call \emph{Banach-orthogonality equation}, for unknown maps $f,g$:
\begin{equation}\label{e1}
\sl f(x),g(\alpha)\sr=\sl x,\alpha\sr\hspace{10mm}\forall x\in E,\forall \alpha\in E^*.
\end{equation}
The \emph{generalized orthogonality equation} introduced in \cite{LukasikWojcik1} and \cite{Chmielinski1}
is a special case of (\ref{e1}) where $E$ and $F$ are Hilbert spaces, $E^*$ and $F^*$
are identical with $E$ and $F$ via the canonical (conjugate) linear isometric isomorphisms induced by the inner products, and $\sl\cdot,\cdot\sr$
denotes the inner products. {\L}ukasik and W\'{o}jcik \cite{LukasikWojcik1} have recently characterized the solutions $(f,g)$ of the generalized
orthogonality equation. (See also \cite{Lukasik1} in the case that $E,F$ are pre-Hilbert spaces.) In this paper we give a characterization
of the solutions $(f,g)$ of (\ref{e1}) in the case that $F$ is a reflexive Banach space, via decompositions of $f,g$ by
invertible linear and arbitrary nonlinear parts. Then the result of \cite{LukasikWojcik1} is seen as a special case of our main result.
In the remainder of this section we consider two preliminary lemmas and in the next section we give the main result.
The proof of the following lemma is similar to the proofs of Lemmas 1 and 2 of \cite{LukasikWojcik1}. For the sake of completeness we
add the proof.
\begin{lemma}\label{l1}
Let $C,D$ be Banach spaces and $S:C\to D,T:C^*\to D^*$ be two maps satisfying
$$\sl S(x),T(\alpha)\sr=\sl x,\alpha\sr\hspace{10mm}\forall x\in C,\forall \alpha\in C^*.$$
\begin{enumerate}
\item[(i)] If $\overline{\r{Lin}S(C)}=D$ then $T$ is a bounded linear operator.
\item[(ii)] If $\overline{\r{Lin}T(C^*)}=D^*$ then $S$ is a bounded linear operator.
\end{enumerate}
\end{lemma}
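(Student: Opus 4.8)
The plan is to prove both parts by the same two-step strategy — first establish linearity by a density argument, then boundedness by the closed graph theorem — with part (ii) being the mirror image of part (i).

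For (i): fix $\alpha_1,\alpha_2\in C^*$ and scalars $a,b$. Applying the hypothesis to each term, for every $x\in C$ we obtain
$$\sl S(x),T(a\alpha_1+b\alpha_2)\sr=\sl x,a\alpha_1+b\alpha_2\sr=\sl S(x),aT(\alpha_1)+bT(\alpha_2)\sr,$$
so the functional $\delta=T(a\alpha_1+b\alpha_2)-aT(\alpha_1)-bT(\alpha_2)\in D^*$ annihilates $S(C)$, hence $\r{Lin}S(C)$, and — being norm continuous — it annihilates the closure $\overline{\r{Lin}S(C)}=D$; therefore $\delta=0$ and $T$ is linear. For boundedness I would apply the closed graph theorem to $T:C^*\to D^*$ (both Banach spaces): if $\alpha_n\to\alpha$ in $C^*$ and $T(\alpha_n)\to\beta$ in $D^*$, then for each $x\in C$ we have $\sl S(x),T(\alpha_n)\sr=\sl x,\alpha_n\sr\to\sl x,\alpha\sr=\sl S(x),T(\alpha)\sr$ and also $\sl S(x),T(\alpha_n)\sr\to\sl S(x),\beta\sr$, so $\beta-T(\alpha)$ annihilates $S(C)$ and hence all of $D$ exactly as above, giving $\beta=T(\alpha)$.

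For (ii): interchange the roles of $S$ and $T$, testing now against the range of $T$ inside $D^*$. For $x_1,x_2\in C$ and scalars $a,b$ the hypothesis yields $\sl S(ax_1+bx_2)-aS(x_1)-bS(x_2),T(\alpha)\sr=0$ for all $\alpha\in C^*$; since $\overline{\r{Lin}T(C^*)}=D^*$ separates the points of $D$, the vector $S(ax_1+bx_2)-aS(x_1)-bS(x_2)\in D$ vanishes, so $S$ is linear. The closed graph theorem applied to $S:C\to D$ then gives boundedness by the same limiting argument (if $x_n\to x$ and $S(x_n)\to y$, then $y-S(x)$ pairs to $0$ with every $T(\alpha)$, hence with all of $\overline{\r{Lin}T(C^*)}=D^*$, so $y=S(x)$).

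I do not anticipate a genuine obstacle here. The only points that need care are keeping every convergence and continuity statement in the norm topologies — so that the closed graph theorem applies and the pairing is jointly continuous in the variable being perturbed — and noting that the density hypothesis ($\r{Lin}S(C)$ dense in $D$, respectively $\r{Lin}T(C^*)$ dense in $D^*$) is precisely what upgrades ``annihilates the range'' to ``is the zero functional'' (respectively ``is the zero vector'').
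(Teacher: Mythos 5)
Your proof is correct and follows essentially the same route as the paper: linearity via the density of $\r{Lin}S(C)$ in $D$ (resp.\ the point-separating density of $\r{Lin}T(C^*)$ in $D^*$), followed by the Closed Graph Theorem with the identical limiting computation. The only cosmetic difference is that you verify linearity in one step with a general combination $a\alpha_1+b\alpha_2$, where the paper treats additivity and homogeneity separately, and you write out part (ii) explicitly where the paper merely says it is similar.
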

\begin{proof}
(i) For every $\alpha,\alpha',x$ we have
$$\sl S(x),T(\alpha+\alpha')\sr=\sl x,\alpha+\alpha'\sr=\sl S(x),T(\alpha)+T(\alpha')\sr,$$
and thus $T(\alpha+\alpha')|_{S(C)}=(T(\alpha)+T(\alpha'))|_{S(C)}$. Since $T(\alpha+\alpha'),T(\alpha)+T(\alpha')$
are bounded linear functionals and $\overline{\r{Lin}S(C)}=D$ we have $T(\alpha+\alpha')=T(\alpha)+T(\alpha')$. Similarly, for every scalar
$r$ we have $T(r\alpha)=rT(\alpha)$. So it was proved that $T$ is linear. Suppose that $\alpha_n\to\alpha$ and $T(\alpha_n)\to\beta$.
We have $\sl S(x),T(\alpha_n)\sr=\sl x,\alpha_n\sr\to\sl x,\alpha\sr=\sl S(x),T(\alpha)\sr$. On the other hand,
$\sl S(x),T(\alpha_n)\sr\to\sl S(x),\beta\sr$. Thus,  $\sl S(x),T(\alpha)\sr=\sl S(x),\beta\sr$ and hence $T(\alpha)|_{S(C)}=\beta|_{S(C)}$.
It follows that $T(\alpha)=\beta$. Now since $C^*,D^*$ are Banach spaces Closed Graph Theorem implies that $T$ is continuous.
The proof of (ii) is similar.
\end{proof}
Let $C$ be a Banach space. For subsets $U\subseteq C$ and $V\subset C^*$ we let
$$U^\bot:=\{\alpha\in C^*: \sl x,\alpha\sr=0, \forall x\in U\},\hspace{5mm} V^\bot:=\{x\in C: \sl x,\alpha\sr=0, \forall \alpha\in V\}.$$
It is easily checked that $U^\bot$ and $V^\bot$ are closed linear subspaces. Let $K$ be a closed linear subspace of $C$.
Then the quotient vector space $C/K$ is a Banach space with the quotient norm defined by $\|x+K\|:=\inf_{x'\in K}\|x+x'\|$.
It is clear that $(C/K)^*$ embeds canonically in $C^*$ via the map $(C/K)^*\xrightarrow{I}C^*$ defined by $\alpha\mapsto \alpha P$ ($\alpha\in(C/K)^*$)
where $C\xrightarrow{P}C/K$ denotes the canonical projection. We also call the map $C^*\xrightarrow{R}K^*$,
defined by $\alpha\mapsto\alpha|_K$, the canonical restriction.
\begin{lemma}\label{l2}
Let $C$ be a reflexive Banach space and $W$ be a closed linear subspace of $C^*$. Then $W=W^{\bot\bot}$.
It follows that $(C/W^\bot)^*$ coincides with $W$  via the embedding $I$ defined as above with $K:=W^\bot$.
\end{lemma}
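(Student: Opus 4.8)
The plan is to establish the two assertions in turn, the first being the substantive one. For the identity $W=W^{\bot\bot}$, the inclusion $W\subseteq W^{\bot\bot}$ is immediate from the definitions: if $\alpha\in W$ then $\sl x,\alpha\sr=0$ for every $x\in W^\bot$, so $\alpha\in(W^\bot)^\bot=W^{\bot\bot}$. For the reverse inclusion I would argue by contraposition using Hahn--Banach together with reflexivity. Suppose $\alpha_0\in C^*\setminus W$. Since $W$ is a norm-closed subspace, the Hahn--Banach separation theorem provides $\Phi\in C^{**}$ with $\Phi|_W=0$ and $\Phi(\alpha_0)\neq0$. Because $C$ is reflexive, $\Phi$ is the evaluation at some $x\in C$, i.e. $\Phi(\beta)=\sl x,\beta\sr$ for all $\beta\in C^*$. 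Then $\sl x,\beta\sr=0$ for every $\beta\in W$, so $x\in W^\bot$, while $\sl x,\alpha_0\sr=\Phi(\alpha_0)\neq0$ shows $\alpha_0\notin W^{\bot\bot}$. Hence $W^{\bot\bot}\subseteq W$, and equality follows.

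The second assertion then follows by a standard computation of the range of $I$. With $K:=W^\bot$, every functional of the form $\alpha P$ with $\alpha\in(C/K)^*$ annihilates $K$ (because $P(K)=\{0\}$), so $\r{Im}(I)\subseteq K^\bot$. Conversely, if $\beta\in K^\bot$ then $\beta$ vanishes on $\ker P=K$, and therefore factors through the quotient as $\beta=\tilde\beta P$ for a unique linear functional $\tilde\beta$ on $C/K$; since $P$ is a quotient map, one checks $\tilde\beta$ is bounded, so $\beta=I(\tilde\beta)\in\r{Im}(I)$. Thus $\r{Im}(I)=K^\bot=(W^\bot)^\bot=W^{\bot\bot}$, which by the first part equals $W$. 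Since $I$ is the canonical (isometric) embedding described above, this identifies $(C/W^\bot)^*$ with the subspace $W$ of $C^*$.

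I expect the only point where anything beyond bookkeeping is used to be the reverse inclusion $W^{\bot\bot}\subseteq W$, where reflexivity is essential: without it, Hahn--Banach only produces a separating functional in $C^{**}$ that need not be represented by an element of $C$, and indeed $W^{\bot\bot}$ is in general merely the weak-$*$ closure of $W$. Everything else is routine: the first inclusion is formal, and the factorization $\beta=\tilde\beta P$ with $\tilde\beta$ bounded is just the universal property of the quotient Banach space.
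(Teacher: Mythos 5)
Your proof is correct and follows essentially the same route as the paper: Hahn--Banach separation plus reflexivity to realize the separating functional as evaluation at some $x\in C$, which gives $W^{\bot\bot}\subseteq W$ (your contrapositive phrasing versus the paper's contradiction is only cosmetic). The one small difference is that for the second assertion you also verify surjectivity of $I$ onto $K^\bot=W$ via the factorization $\beta=\tilde\beta P$, whereas the paper only checks the inclusion $\beta P\in W^{\bot\bot}=W$ and leaves the converse implicit.
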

\begin{proof}
It is clear that $W\subseteq W^{\bot\bot}$. We must show the reverse inclusion. Assume, to reach a contradiction, that there exists an $\alpha$
in $W^{\bot\bot}\setminus W$. By geometric version of the Hahn-Banach Theorem and reflexivity of $C$ there is a $x\in C$ with $\sl x,\alpha\sr\ne0$
and $\sl x,\alpha'\sr=0$ for every $\alpha'\in W$. This contradicts $\alpha\in W^{\bot\bot}$. Thus $W=W^{\bot\bot}$. For the proof of the second
part, let $P$ be as above with $K:=W^\bot$. Let $\beta\in(C/W^\bot)^*$. We must show that $\beta P\in W$.
But it is obvious because $\beta P\in W^{\bot\bot}=W$.
\end{proof}
\section{The Main Result}
Let $S:C\to D$ be a bounded linear operator. Then its adjoint $S^*:D^*\to C^*$ is a bounded linear operator defined by
$\beta\mapsto \beta S$ for every $\beta\in D^*$. Thus we have the following identity.
$$\sl Sx,\beta\sr=\sl x,S^*\beta\sr\hspace{10mm}\forall x\in C,\forall \beta\in D^*.$$
It is easily checked that $S$ is invertible if and only if $S^*$ is invertible. Also if $S^*$ is surjective then $S$ is injective.
Now we are ready to state our main result.
\begin{theorem}\label{t1}
Let $E,F$ be Banach spaces such that $F$ is reflexive. Let $f:E\to F$, $g:E^*\to F^*$ be two maps.
Then the pair $(f,g)$ satisfies (\ref{e1}) if and only if there are
\begin{enumerate}
\item[(i)] closed linear subspaces $M\subseteq L\subseteq F$,
\item[(ii)] an invertible bounded linear operator $A:E\to L/M\subseteq F/M$,
\item[(iii)] a (not necessarily linear) right inverse $\varphi$ of the canonical projection $L\xrightarrow{P}L/M$,
i.e. $P\varphi=\r{id}_{L/M}$, and
\item[(iv)] a (not necessarily linear) right inverse $\psi$ of the canonical restriction $F^*\xrightarrow{R}L^*$, i.e. $R\psi=\r{id}_{L^*}$,
\end{enumerate}
such that
$$f=\varphi A,\hspace{10mm}g=\psi I(A^*)^{-1},$$
where $I$ denotes the canonical injection $(L/M)^*\xrightarrow{I}L^*$.
\end{theorem}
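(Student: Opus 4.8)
The theorem is an equivalence, so the plan is to treat the two implications separately; reflexivity of $F$ will be used only in the forward (necessity) direction. \emph{Sufficiency} should be a direct unwinding of the definitions: for $x\in E$ and $\alpha\in E^*$, since $\varphi A(x)\in L$ and $R\psi=\r{id}_{L^*}$, the pairing $\sl\varphi A(x),\psi I(A^*)^{-1}(\alpha)\sr$ equals $\sl\varphi A(x),I(A^*)^{-1}(\alpha)\sr$ in the $L$--$L^*$ duality; then $I(A^*)^{-1}(\alpha)=((A^*)^{-1}\alpha)\circ P$ together with $P\varphi=\r{id}_{L/M}$ collapses this to $\sl A(x),(A^*)^{-1}(\alpha)\sr=\sl x,A^*(A^*)^{-1}(\alpha)\sr=\sl x,\alpha\sr$, which is (\ref{e1}).

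For \emph{necessity}, starting from a solution $(f,g)$ of (\ref{e1}), I would put
$$L:=\overline{\r{Lin}f(E)},\qquad V:=\overline{\r{Lin}g(E^*)},\qquad M:=L\cap V^\bot,$$
closed linear subspaces with $M\subseteq L\subseteq F$, and let $P:L\to L/M$, $R:F^*\to L^*$ be the canonical maps. Pairing (\ref{e1}) against a fixed $\alpha$ shows the additive and homogeneity defects of $f$ lie in $(g(E^*))^\bot\cap L=V^\bot\cap L=M$, so $Pf:E\to L/M$ is linear; it is injective because $f(x)\in V^\bot$ forces $\sl x,\alpha\sr=\sl f(x),g(\alpha)\sr=0$ for all $\alpha$, and its range is dense since $\r{Lin}f(E)$ is dense in $L$ and $P$ is continuous and onto. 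Each $g(\alpha)$ lies in $V$, so $g(\alpha)|_L$ annihilates $M$ and descends to some $\bar g(\alpha)\in(L/M)^*$ with $I\bar g(\alpha)=Rg(\alpha)$, and (\ref{e1}) becomes $\sl Pf(x),\bar g(\alpha)\sr=\sl x,\alpha\sr$. Applying Lemma \ref{l1}(i) with $S=Pf$, $T=\bar g$ (its density hypothesis now being met) shows $\bar g$ is a bounded linear operator.

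The heart of the argument is to upgrade $Pf$ to the required invertible operator $A$, and this is where I expect the main obstacle to lie. The key step I would isolate is the claim that $\r{Lin}\bar g(E^*)$ is dense in $(L/M)^*$; transported through the isometry $L^*\cong F^*/L^\bot$ this amounts to $V+L^\bot$ being dense in $M^\bot$, and here reflexivity of $F$ enters: by Lemma \ref{l2} one has $(V^\bot)^\bot=V$, and for closed subspaces of a reflexive space the annihilator of an intersection equals the closed sum of the annihilators (a consequence of $W=W^{\bot\bot}$), so $M^\bot=(L\cap V^\bot)^\bot=\overline{L^\bot+(V^\bot)^\bot}=\overline{L^\bot+V}$. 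Granting this density, a Closed Graph Theorem argument gives continuity of $Pf$: if $x_n\to x$ in $E$ and $Pf(x_n)\to z$ in $L/M$, then $\sl z-Pf(x),\bar g(\alpha)\sr=0$ for all $\alpha$, so $z-Pf(x)$ annihilates the dense set $\r{Lin}\bar g(E^*)$, hence all of $(L/M)^*$, whence $z=Pf(x)$. Now $\sl Pf(x),\bar g(\alpha)\sr=\sl x,\alpha\sr$ reads $(Pf)^*\bar g=\r{id}_{E^*}$, so $(Pf)^*$ is surjective, hence $Pf$ is bounded below (open mapping theorem applied to $(Pf)^*$); combined with its dense range, $A:=Pf:E\to L/M$ is an invertible bounded linear operator. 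The same identity yields $\bar g=(A^*)^{-1}$, so $Rg=I\bar g=I(A^*)^{-1}$.

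Finally I would read off the two nonlinear parts. Put $\varphi:=fA^{-1}:L/M\to L$; then $P\varphi=(Pf)A^{-1}=\r{id}_{L/M}$ and $\varphi A=f$, giving (iii). Let $\psi:L^*\to F^*$ be any right inverse of $R$ which in addition satisfies $\psi(Rg(\alpha))=g(\alpha)$ for all $\alpha$; such $\psi$ exists because $Rg=I(A^*)^{-1}$ is injective (so this prescription is consistent on $Rg(E^*)$ and is a partial section of $R$) and $R$ is onto (so it extends to all of $L^*$). Then $g=\psi Rg=\psi I(A^*)^{-1}$, which is (iv), and the proof is complete. The two genuinely delicate points are thus the annihilator identity underlying the density claim and the passage from ``$(Pf)^*$ surjective'' to ``$Pf$ invertible''; everything else is bookkeeping.
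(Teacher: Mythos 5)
Your proof is correct and follows essentially the same route as the paper: the same subspaces $L=\overline{\r{Lin}f(E)}$ and $M=L\cap g(E^*)^\bot$, the same factorizations $f=\varphi(Pf)$ and $g=\psi(Rg)$, reflexivity used for exactly the same purpose (showing $\overline{\r{Lin}\,\bar g(E^*)}=(L/M)^*$), and a closed-graph/duality argument making $A=Pf$ invertible with $\bar g=(A^*)^{-1}$. The only divergences are local: you derive the density claim from the annihilator-of-intersection identity in $F$ whereas the paper defines $M$ as a pre-annihilator so that Lemma \ref{l2} applied to the reflexive subspace $L$ gives it immediately; you get the lower bound on $A$ from surjectivity of $A^*$ via the open mapping theorem where the paper uses norming functionals; and your explicit constructions $\varphi:=fA^{-1}$ and $\psi$ extending $Rg(\alpha)\mapsto g(\alpha)$ are, if anything, slightly more careful than the paper's bare existence assertions for these sections.
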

\begin{proof}
The ``if'' part of the theorem is easily checked. We only prove the ``only if'' part.

Let $L:=\overline{\r{Lin}f(E)}\subseteq F$. Let $F^*\xrightarrow{R}L^*$ denote the canonical restriction and let $Q_0:=Rg:E^*\to L^*$.
We have $\sl f(x),Q_0(\alpha)\sr=\sl x,\alpha\sr$, $\forall x\in E,\forall \alpha\in E^*$.
Applying Lemma \ref{l1}(i) with $C:=E,D:=L,S:=f,T:=Q_0$, we find that $Q_0$ is a bounded linear operator.
Note that there is a right inverse $\psi$ of $R$ such that $g=\psi Q_0$.

Let $M:=Q_0(E^*)^\bot=\overline{Q_0(E^*)}^\bot\subseteq L$. Thus $Q_0(\alpha)|_M=0$ for every $\alpha\in E^*$. This shows that
$Q_0$ actually takes the elements of $E^*$ to the space $(L/M)^*$ that is considered as a subspace of $L^*$ via the canonical injection
$(L/M)^*\xrightarrow{I}L^*$. We let $\hat{Q}_0$ denote the same operator $Q_0$ but with the new codomain $(L/M)^*$.
Thus we have $Q_0=I\hat{Q}_0$ and also $g=\psi I\hat{Q}_0$.

Since $F$ is reflexive, $L$ is reflexive and thus it follows from Lemma \ref{l2} that
$\overline{\hat{Q}_0(E^*)}=\overline{Q_0(E^*)}=(L/M)^*\subseteq L^*$. So $\hat{Q}_0$ is a bounded
linear operator from $E^*$ to $(L/M)^*$ with dense range.

Let $Q_1:=Pf$ where $L\xrightarrow{P}L/M$ denotes the canonical projection. Thus there is a right inverse $\varphi$ of $P$ such that $f=\varphi Q_1$.
We also have $\sl Q_1(x),\hat{Q}_0(\alpha)\sr=\sl x,\alpha\sr$, $\forall x\in E,\forall \alpha\in E^*$.
Applying Lemma \ref{l1}(ii) with $C:=E,D:=L/M,S:=Q_1,T:=\hat{Q}_0$, we find that $Q_1$ is a bounded linear operator.

Let $x$ be an arbitrary element of $E$. Let $\alpha\in E^*$ be such that $\|\alpha\|=1$ and $\sl x,\alpha\sr=\|x\|$. (The existence of such a functional
is a consequence of the Hahn-Banach theorem.) We have
$$\|x\|=|\sl x,\alpha\sr|=|\sl Q_1(x),\hat{Q}_0(\alpha)\sr|\leq\|\hat{Q}_0\|\|Q_1(x)\|.$$
This implies that for every $x\in E$, $\frac{\|x\|}{\|\hat{Q}_0\|}\leq \|Q_1(x)\|$. It follows that the range of $Q_1$ is a closed linear subspace of $L/M$.
On the other hand, since $\overline{\r{Lin}f(E)}=L$, we have $\overline{Q_1(E)}=L/M$. Thus $Q_1$ is a surjective operator.

For every $x\in E$ and $\alpha\in E^*$ we have
$$\sl x,\alpha\sr=\sl Q_1(x),\hat{Q}_0(\alpha)\sr=\sl x,Q_1^*\hat{Q}_0(\alpha)\sr.$$
This implies that $Q_1^*\hat{Q}_0=\r{id}_{E^*}$. Thus $Q_1^*$ is surjective and $Q_1$ is injective. It follows that $Q_1$ is invertible and
$\hat{Q}_0=(Q_1^*)^{-1}$. Now, we let $A:=Q_1$. The proof is complete.
\end{proof}
The following corollary of Theorem \ref{t1} is the main result of \cite{LukasikWojcik1}.
\begin{corollary}
Let $E$,$F$ be Hilbert spaces and $f,g:E\to F$ be two maps. Suppose that $f$,$g$ satisfy the generalized orthogonality equation:
$$\sl f(x),g(y)\sr=\sl x,y\sr,\hspace{10mm}\forall x,y\in E,$$
where $\sl\cdot,\cdot\sr$ denotes the inner products. Then there exist
\begin{enumerate}
\item[(i)] pairwise orthogonal closed linear subspaces $F_1,F_2,F_3$ of $F$ such that
$$F=F_1\oplus F_2\oplus F_3,$$
\item[(ii)] an invertible bounded linear operator $B:E\to F_1$,
\item[(iii)] a (not necessarily linear) map $\mu:E\to F_2$, and
\item[(iv)] a (not necessarily linear) map $\nu:E\to F_3$,
\end{enumerate}
such that
$$f=B+\mu\hspace{10mm}g=(B^*)^{-1}+\nu.$$
\end{corollary}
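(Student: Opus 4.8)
The plan is to read the corollary off from Theorem~\ref{t1} by translating its abstract quotients, duals, injections and restrictions into Hilbert-space language via orthogonal complements, orthogonal projections and the Riesz representation; essentially no new analysis is needed. First I would invoke the fact that Hilbert spaces are reflexive, so Theorem~\ref{t1} applies to $f,g$ (here $E^*,F^*$ are identified with $E,F$ as in the introduction) and produces closed subspaces $M\subseteq L\subseteq F$, an invertible bounded linear operator $A:E\to L/M$, a right inverse $\varphi$ of the canonical projection $P:L\to L/M$, and a right inverse $\psi$ of the canonical restriction $R:F^*\to L^*$, with $f=\varphi A$ and $g=\psi I(A^*)^{-1}$.

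Next I would fix the decomposition. Let $F_1:=L\cap M^{\perp}$ be the orthogonal complement of $M$ inside $L$, $F_2:=M$ and $F_3:=L^{\perp}$. Then $F_1,F_2,F_3$ are pairwise orthogonal and $F=F_1\oplus F_2\oplus F_3$, since $L=F_2\oplus F_1$ and $F=L\oplus F_3$. The natural map $L/M\to F_1$ sending $v+M$ to $\Pi_{F_1}v$ (with $\Pi_{F_1}$ the orthogonal projection of $F$ onto $F_1$) is an isometric isomorphism, and I would let $B:E\to F_1$ be $A$ composed with it, so $B$ is bounded linear and invertible. Under the Riesz identifications $F^*\cong F$, $L^*\cong L$, $(L/M)^*\cong F_1$ I would verify the following dictionary: $R:F^*\to L^*$ becomes the orthogonal projection $\Pi_L:F\to L$; $P:L\to L/M$ becomes the orthogonal projection $L\to F_1$; the canonical injection $I:(L/M)^*\to L^*$ becomes the inclusion $F_1\hookrightarrow L$ (indeed, if $\alpha\in(L/M)^*$ corresponds to $w\in F_1$, then $I(\alpha)=\alpha P$ sends $x\in L$ to $\langle\Pi_{F_1}x,w\rangle=\langle x,w\rangle$, so $I(\alpha)$ corresponds to $w$); and $A^*:(L/M)^*\to E^*$ becomes the Hilbert-space adjoint $B^*:F_1\to E$, hence $(A^*)^{-1}$ becomes $(B^*)^{-1}:E\to F_1$.

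Then I would read off the two right inverses. A right inverse of the orthogonal projection $\Pi_L:F\to L$ is necessarily of the form $u\mapsto u+\nu_0(u)$ with $\nu_0:L\to F_3$ an arbitrary map, so $\psi(u)=u+\nu_0(u)$; likewise $\varphi(w)=w+\mu_0(w)$ with $\mu_0:F_1\to F_2$ arbitrary. Substituting into $f=\varphi A$ gives $f(x)=Bx+\mu_0(Bx)$, and substituting into $g=\psi I(A^*)^{-1}$ gives $g(x)=(B^*)^{-1}x+\nu_0((B^*)^{-1}x)$. Putting $\mu:=\mu_0\circ B:E\to F_2$ and $\nu:=\nu_0\circ(B^*)^{-1}:E\to F_3$ — which are arbitrary maps $E\to F_2$ and $E\to F_3$ because $B$ and $(B^*)^{-1}$ are bijections — one obtains $f=B+\mu$ and $g=(B^*)^{-1}+\nu$, as asserted.

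The step I expect to require the most care is the verification of this dictionary, and in particular that the abstract adjoint $A^*$ transports to the genuine Hilbert adjoint $B^*$ while the injection $I$ transports to an honest inclusion: in the complex case one must keep track of the conjugate-linearity of the Riesz maps on both $E^*$ and $(L/M)^*$ and check that the two conjugations cancel. Everything else is routine substitution once the identifications are pinned down.
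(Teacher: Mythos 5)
Your proposal is correct and follows essentially the same route as the paper: the same decomposition $F_1=L\cap M^{\perp}$, $F_2=M$, $F_3=L^{\perp}$, the same definition of $B$ as $A$ transported along $L/M\cong F_1$, and the same extraction of $\mu$ and $\nu$ by writing the right inverses $\varphi$ and $\psi$ as identity plus an arbitrary component into $F_2$ and $F_3$ respectively (your $\mu_0,\nu_0$ are the paper's $\tilde\varphi,\tilde\psi$). You merely make the Riesz-representation dictionary more explicit than the paper does, which is harmless.
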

\begin{proof}
We identify $E^*$ and $F^*$ with $E$ and $F$ via the canonical (conjugate) linear isomorphisms induced by inner products.
Let $L,M,A,\varphi,\psi$ be as in the formulation of Theorem \ref{t1}. Let $F_3$ be the orthogonal complement of $L$ in $F$, let $F_2:=M$, and let
$F_1$ be the orthogonal complement of $M$ in $L$. Thus $L/M$ is canonically identified with $F_1$ and $L=F_1\oplus F_2$.
We let $B$ be the same operator $A$ but with codomain $F_1\cong L/M$.
Let $F_1\xrightarrow{\hat{\varphi}}F_2\oplus F_1$ denote the same map $L/M\xrightarrow{\varphi}L$. Thus there is a unique map
$F_1\xrightarrow{\tilde{\varphi}}F_2$ such that $\hat{\varphi}=\tilde{\varphi}+\r{id}_{F_1}$. We let $\mu:=\tilde{\varphi}B$.
Let $L\xrightarrow{\hat{\psi}}L\oplus F_3$ denote the same map $L^*\xrightarrow{\psi}F^*$. Thus there is a unique map
$L\xrightarrow{\tilde{\psi}}F_3$ such that $\hat{\psi}=\r{id}_L+\tilde{\psi}$. We let $\nu:=\tilde{\psi}(B^*)^{-1}$. This completes the proof.
\end{proof}
\begin{problem}
Characterize the solutions of (\ref{e1}) in the case that $F$ is not reflexive.
\end{problem}
A natural generalization of the problem considered in this paper is as follows.
\begin{problem}
Let $(E_1,E_2)$ be a pair of Banach spaces together with a pairing $\sl\cdot,\cdot\sr$ that is a (non degenerate)
bilinear functional on $E_1\times E_2$.
Let $(F_1,F_2)$ be another pair of Banach spaces with the pairing $[\cdot,\cdot]$. Characterize the solutions
$(E_1\xrightarrow{f_1}F_1,E_2\xrightarrow{f_2}F_2)$ of the following functional equation.
$$[f_1(x_1),f_2(x_2)]=\sl x_1,x_2\sr\hspace{10mm}\forall x_1\in E_1,\forall x_2\in E_2.$$
\end{problem}
Another possible extension is related to the theory of Hilbert C*-modules \cite{Lance1}:
\begin{problem}
Characterize the solutions of equation (\ref{e1}) in the case that $E$ and $F$ are Hilbert C*-modules over a C*-algebra $\mathcal{A}$
and $\sl\cdot,\cdot\sr$ denotes the $\mathcal{A}$-valued inner products.
\end{problem}
\bibliographystyle{amsplain}

\end{document}